\newcommand{\comment}[1]{\emph{\color{red}}}
\newcommand{\eps}{\varepsilon}
\newcommand\R{\mathbb{ R}}
\newcommand\E{\mathbb{E}}
\newtheorem{theorem}{Theorem}[section]
\newtheorem{corollary}[theorem]{Corollary}
\newtheorem{lemma}[theorem]{Lemma}
\newtheorem{remark}[theorem]{Remark}
\title{Discrete and continuum links to a nonlinear coupled transport problem of interacting populations}
\author[1]{M. H. Duong\footnote{Corresponding author, email: m.h.duong@warwick.ac.uk }}
\author[2]{A. Muntean}
\author[2]{O. M. Richardson}
\affil[1]{University of Warwick, UK. }
\affil[2]{Karlstad University, Sweden.}
\begin{document}
\maketitle
\begin{abstract}
We are interested in exploring interacting particle systems that can be seen as microscopic models for a particular structure of  coupled transport flux
arising when  different populations are jointly evolving. The scenarios we have in mind are inspired by the dynamics of pedestrian flows in open spaces and are intimately connected to cross-diffusion and thermo-diffusion problems holding a variational structure.  The tools we use include a suitable structure of the relative entropy controlling TV-norms, the construction of Lyapunov functionals and particular closed-form solutions to nonlinear transport equations, a hydrodynamics limiting procedure due to Philipowski, as well as the construction of numerical approximates to both the continuum limit problem in 2D and to the original interacting particle systems.  

\end{abstract}
\section{Introduction}
\label{intro}
The starting point of the results presented in this paper is the following question\footnote{This question was posed by Prof. M. Mimura (Meiji, Tokyo, Japan)  to A. Muntean during a visit at Meiji University}:\\
{\em Can one design a system of interacting particles that converges in some suitable limit to the following system of nonlinearly coupled system of transport equations:
\begin{numcases}{}
\partial_t u=(u(u+v)_x)_x,\label{eq: u}\\
\partial_t v=(v(u+v)_x)_x,\label{eq: v}
\end{numcases}
with initial conditions $u(0,x)=u_0(x)$ and  $v(0,x)=v_0(x) \ (x\in \mathbb{R})$}?
Here $u$ and $v$ refer to mass concentrations of some chemical species which are participating in a non-competitive manner in a joint transport process.  The background of the question (and interest of M. Mimura) is connected to the role pheromones play in influencing the aggregation phenomenon, one of the main survival mechanisms in insects, birds and animal colonies; we refer the reader to \cite{Funaki} for more on this context. It is worth noting the coupled structure of the transport fluxes resembles situations arising in cross-diffusion and thermo-diffusion. Compare  \cite{Mazur} for the thermodynamical foundations of cross- and thermo- diffusion and \cite{Vanag} for a nice paper illustrating the role of cross-diffusion mechanisms towards pattern formation in chemical systems.  Our own interest in this framework targets at the fundamental  understanding of well-observed  optimal self-organization behaviours (e.g  lane formation in counter-flows) exhibited by the motion of  pedestrian flows (cf. e.g. \cite{Helbing_self} and references cited therein).

Interestingly,  due to the symmetry in the structure of the equations, the system (\ref{eq: u}) -- (\ref{eq: v}) admits a direct interpretation from the porous media theory point of view, which later turns out to be very useful in understanding mathematically the particle system origin of this transport problem.

We assume that $u$ and $v$ denote two populations (of pedestrians, ants, chemical species, etc.) that like to travel together. Think, for instance,  of  a pair of large families of individuals that wish to reach perhaps a common destination or target, under the basic assumption that besides some kind of social pairwise repulsion and adherence to the same drift there are no other interactions  in the crowd made of the two populations.
This basic situation can be modelled as a system of continuity equations 
 \begin{eqnarray*}
 \partial_t u+{\rm div}(uw)&=&0,\\
  \partial_t v+{\rm div}(vw)&=&0,
 \end{eqnarray*}
 where $w$ is the common drift to which the two populations adhere. The velocity vector $w$ is assumed now to comply with Darcy's law
 \begin{equation}\label{Darcy}
 w=- \frac{K}{\mu}\nabla p.
 \end{equation}
 In (\ref{Darcy}),  $\frac{K}{\mu}\ \in (0,\infty)$ denotes the permeability coefficient (usually a tensor for a heterogeneous region) and $p$ is the  total (social) pressure in the system. 
Now, making  the ansatz on the structure of the pressure  
\begin{equation*}
p=\mu(u+v), 
\end{equation*}
and then summing up the above continuity equations, we obtain the 
system \eqref{eq: u}-\eqref{eq: v}, where for simplicity we take $K\equiv 1$.

The paper is organised as follows. In Section \ref{sec: analytical}, we provide some basic analytic understanding of \eqref{eq: u}-\eqref{eq: v} by transforming the system to an equivalent one, showing the local well-posedness, constructing a special class of solutions and proving the preservation of relative entropy and the consequences this has on the large-time behaviour of the system. In Section \ref{sec: particle}, we introduce a stochastically interacting many-particle system to approximate \eqref{eq: u}-\eqref{eq: v}. Finally, Section \ref{sec: numerical} presents numerical illustrations of the particle system, indicating numerical evidence on the expected convergence.
\section{Analytical results}
\label{sec: analytical}
In this section, we provide a couple of analytical results on the continuum model. We first transform the system \eqref{eq: u}-\eqref{eq: v} to an equivalent one. Using this transformation, we ensure in a straightforward way the  local existence of classical solutions. In addition, we construct a special class of solutions and show remarkable properties of these solutions, especially concerning the preservation of the relative entropy.
\subsection{An equivalent system}
Defining $w:=u+v$, we see that $w$ solves the following porous media-like equation:
\begin{equation}
\partial_t w =\frac{1}{2}\partial_{xx} (w^2),\quad w(0,x)=u_0(x)+v_0(x).
\end{equation}
We transform the system \eqref{eq: u}-\eqref{eq: v} posed for $(u,v)$ into the following system for $(w,u)$:
\begin{numcases}{}
\partial_t w=\frac{1}{2}\partial_{xx}(w^2),\quad  w(0,x)=w_0(x),\label{eq: eqn for w}\\
\partial_t u=\partial_x(u w_x),\quad u(0,x)=u_0(x).\label{eq:eqn for u}
\end{numcases}
Conversely, suppose that $(w,u)$ satisfies the system \eqref{eq: eqn for w}-\eqref{eq:eqn for u}. Then $(u, v)$, where $v=w-u$, satisfies the original system \eqref{eq: u}-\eqref{eq: v}. Therefore, the two systems are equivalent.

The transformation has two advantages. First, the new system \eqref{eq: eqn for w}-\eqref{eq:eqn for u} is only one-sided coupled in the sense that one can solve \eqref{eq: eqn for w} independently to obtain $w$, and then substitute to find $u$ from \eqref{eq:eqn for u} with $w$ given. Second, \eqref{eq: eqn for w} is the famous Boussinesq's equation of groundwater flow, while \eqref{eq:eqn for u} is the standard continuity equation. Both equations have been studied extensively and have a rich literature. Therefore, we can apply existing methods and techniques to handle them from the mathematical analysis point of view.
\subsection{A general solution to the continuity equation by the method of characteristics}
Let $V$ be a given velocity field and $f_0:\R\to \R$ be a given function. We first seek solutions for the following general continuity equation
\begin{equation}
\label{eq: continuity eqn}
\frac{\partial f}{\partial t}+\frac{\partial}{\partial x}[V(x,t)f]=0 \mbox{ with } f(x,0)=f_0(x).
\end{equation}
We consider the following ordinary differential equation (ODE):
\begin{equation}
\label{eq: ODE}
\frac{d}{dt}X(t)=V(X(t),t),\quad X(0)=x.
\end{equation}
The solution of this ODE is $X(t)=F(x,t)$. Conversely, we also can regard $x$ as a function of $X(t)$, i.e., $x=G(X(t),t)$, where $G: \R\times\R\ni(y,t)\mapsto G(y,t)\in \R$ and $G(y,0)=y$. 
\begin{lemma}[Solving the continuity equation, see e.g.\cite{Clement1978}] The function
\label{lem: sol of continuity}
\begin{equation}
\label{eq: sols of continuity}
f(x,t)=f_0(G(x,t))\frac{\partial G}{\partial x}(x,t)=-\frac{f_0(G(x,t))}{V(x,t)}\frac{\partial G}{\partial t}(x,t)
\end{equation}
solves the continuity equation \eqref{eq: continuity eqn}.
\end{lemma}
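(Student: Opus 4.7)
The plan is to derive a single master identity $\partial_t G + V\,\partial_x G = 0$ for the inverse flow, and then to use it twice: once to check that the two expressions on the right of \eqref{eq: sols of continuity} agree, and once to verify the continuity equation by a direct product-rule computation.

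First I would differentiate the inversion identity $F(G(y,t),t) = y$ with respect to $t$ at fixed $y$. Using $\partial_t F(x,t) = V(F(x,t),t)$ evaluated at $x = G(y,t)$ (where $F(G(y,t),t)=y$), the first term becomes $V(y,t)$; the second term is $\partial_x F(G(y,t),t)\,\partial_t G(y,t)$, and differentiating the same inversion identity in $y$ gives $\partial_x F(G(y,t),t)\,\partial_y G(y,t) = 1$. Eliminating $\partial_x F$ between these two relations yields
\[
\partial_t G(y,t) + V(y,t)\,\partial_y G(y,t) = 0,
\]
which is the master identity (rename $y$ to $x$). Rearranged as $\partial_x G = -\partial_t G / V$ wherever $V\ne 0$, it establishes the equality of the two forms of $f$ in \eqref{eq: sols of continuity}.

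Second, with $f(x,t) = f_0(G(x,t))\,\partial_x G(x,t)$, the chain and product rules, after grouping terms, give
\[
\partial_t f + \partial_x(Vf) \;=\; f_0'(G)\,\partial_x G\,\bigl[\partial_t G + V\,\partial_x G\bigr] \;+\; f_0(G)\,\partial_x\bigl[\partial_t G + V\,\partial_x G\bigr].
\]
Both bracketed factors vanish by the master identity, hence $f$ solves \eqref{eq: continuity eqn}. The initial condition is immediate: $G(x,0) = x$ implies $\partial_x G(x,0) = 1$, so $f(x,0) = f_0(x)$.

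The only technical point requiring care, and the main obstacle in a rigorous write-up, is the existence and smoothness of the inverse flow $G$. Under standard assumptions on $V$ (for instance $V \in C^1$ with a mild growth or Lipschitz bound in $x$), classical ODE flow theory guarantees that $x \mapsto F(x,t)$ is for each $t$ a $C^1$ diffeomorphism of $\R$ and that its inverse $G(\cdot,t)$ is jointly $C^1$ in $(y,t)$, which is exactly the regularity the computations above silently use. Under weaker regularity one would have to interpret \eqref{eq: continuity eqn} in the distributional sense and approximate, but for the classical statement here no further work is needed.
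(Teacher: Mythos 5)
Your proof is correct. The paper itself does not prove this lemma---it is stated with only a pointer to \cite{Clement1978}---so there is no in-paper argument to compare against; your write-up supplies precisely the verification the authors leave implicit. The master identity $\partial_t G + V\,\partial_x G = 0$ is the right organizing device, your two uses of it (to reconcile the two expressions in \eqref{eq: sols of continuity} and to collapse $\partial_t f+\partial_x(Vf)$ into the two bracketed terms) both check out, and the initial condition follows from $G(x,0)=x$ as you say. One small simplification: the master identity drops out immediately from differentiating $G(X(t),t)=x$ along the flow \eqref{eq: ODE}, which avoids inverting $\partial_x F$ altogether. The only caveats, which you already flag, are that the second expression in \eqref{eq: sols of continuity} only makes sense where $V\neq 0$, and that your grouping uses equality of the mixed partials $\partial_t\partial_x G=\partial_x\partial_t G$, so $G$ should be taken $C^2$ (guaranteed by the flow regularity you invoke).
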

\subsection{Classical solutions}
The first result of this paper refers to the local existence of classical solutions of \eqref{eq: eqn for w}-\eqref{eq:eqn for u}. Let $T>0$ be sufficiently large but fixed and let $(w_0, u_0)$ be given. We say that the couple $(w,u)$, where $w,u:[0,T]\times\R\mapsto \R$ is a classical solution to the system \eqref{eq: eqn for w}-\eqref{eq:eqn for u} if  $w,u \in C^{2,1}([0,T],\R)$ and satisfy \eqref{eq: eqn for w}-\eqref{eq:eqn for u}.
\begin{theorem}
\label{theo: local existence w}
 Suppose that $w_0$ and $u_0$ are continuous functions in $\R$ with 
\begin{equation*}
\varepsilon\leq w_0(x)\leq \frac{1}{\varepsilon},
\end{equation*}
for some $\varepsilon>0$ and all $x\in \R$. There exists $T^*\in (0,T)$ such that the system \eqref{eq: eqn for w}-\eqref{eq:eqn for u} has a classical solution in $C^{2,1}([0,T^*],\R)$
\end{theorem}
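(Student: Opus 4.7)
The system \eqref{eq: eqn for w}--\eqref{eq:eqn for u} has a triangular structure: equation \eqref{eq: eqn for w} for $w$ is autonomous, while \eqref{eq:eqn for u} for $u$ is a linear continuity equation once $w$ is known. My plan is to exploit this one-sided coupling by first solving the scalar porous-medium-type equation for $w$ and then recovering $u$ from Lemma~\ref{lem: sol of continuity}.

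\textbf{Step 1: construction of $w$.} I would rewrite \eqref{eq: eqn for w} in non-divergence form as $\partial_t w = w\,\partial_{xx}w + (\partial_x w)^2$. Because $\eps \le w_0 \le 1/\eps$, the weak maximum/minimum principle (applied by comparing with the constant sub- and supersolutions $\eps$ and $1/\eps$) forces any classical solution to satisfy the same two-sided bounds throughout its time of existence. Hence the equation is \emph{uniformly} parabolic with bounded coefficients, and I can invoke the standard local existence theory for quasilinear uniformly parabolic equations on $\R$ (for instance the results of Ladyzhenskaya--Solonnikov--Ural'ceva, or an analytic-semigroup/maximal regularity argument in the style of Lunardi applied to the linearisation about $w_0$) to produce some $T^\star\in(0,T)$ and a solution $w\in C^{2,1}([0,T^\star]\times\R)$ obeying those bounds.

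\textbf{Step 2: recovering $u$.} With $w$ in hand, \eqref{eq:eqn for u} reads $\partial_t u + \partial_x(Vu)=0$ with velocity field $V(x,t):=-\partial_x w(x,t)$. Since $w\in C^{2,1}$, the coefficient $V$ is $C^{1}$ in $x$ and continuous in $t$, so after shrinking $T^\star$ if necessary to keep $\partial_x w$ bounded and Lipschitz, Picard--Lindelöf yields a unique global-in-space flow $F(x,t)$ for the ODE \eqref{eq: ODE}, and $F(\cdot,t)$ is a $C^1$-diffeomorphism of $\R$ with inverse $G(\cdot,t)$. Lemma~\ref{lem: sol of continuity} then delivers the explicit classical solution
\[
u(x,t)=u_0(G(x,t))\,\partial_x G(x,t),
\]
whose $C^{2,1}$ regularity is inherited from that of $G$ (under the tacit assumption of enough smoothness on $u_0$; if one wishes to keep only continuity, one first mollifies $u_0$, solves the linear problem, and passes to the limit using stability of the flow together with the linearity of \eqref{eq:eqn for u}).

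\textbf{Main obstacle.} The pivotal step is propagating the lower bound $w\ge \eps$: without it \eqref{eq: eqn for w} degenerates to the genuine porous medium equation, classical solutions need not exist, and the velocity $-\partial_x w$ driving the characteristics can lose the Lipschitz regularity required for Step~2. Once the two-sided bound on $w$ and the ensuing parabolic interior estimates on $\partial_x w$ are secured, Step~2 reduces to a standard Cauchy--Lipschitz/characteristics argument, and the technical weight of the theorem sits squarely on Step~1.
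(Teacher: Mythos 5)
Your proposal follows essentially the same route as the paper: exploit the one-sided (triangular) coupling, solve the Boussinesq equation for $w$ first, and then recover $u$ by the method of characteristics via Lemma~\ref{lem: sol of continuity}. The only difference is one of detail: the paper simply cites \cite[Theorem 3.1]{Vaz06} for the nondegenerate porous-medium step and Peano's theorem for the characteristic ODE, whereas you unpack the content of that citation (comparison with the constant barriers $\eps$ and $1/\eps$ to keep the equation uniformly parabolic, classical quasilinear theory, and a Cauchy--Lipschitz flow), and you are right to flag that with $u_0$ merely continuous the formula $u=u_0(G)\,\partial_x G$ only yields a continuous $u$, a point the paper glosses over.
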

\begin{proof}
This theorem is a direct consequence of \cite[Theorem 3.1]{Vaz06} for
the (global) existence of the Boussinesq's solution and of the Peano's theorem for the local existence of the characteristic trajectory.
\end{proof}
\subsection{A special class of solutions}
Due to the particular structure of the system \eqref{eq: eqn for w}-\eqref{eq:eqn for u}, namely \eqref{eq: eqn for w} being the Boussinesq's equation and \eqref{eq:eqn for u} being the continuity equation, we are able to construct a special class of solutions. We consider a solution profile of quadratic functions for $w$ and then find $u$ accordingly. The idea of the former has been used before, see for instance~\cite{King1993}.
 
\textit{Step 1.} We rewrite \eqref{eq: eqn for w} as
\begin{equation}
\label{eq: rewrite eqn for w}
\partial_t w=(\partial_x w)^2+w\partial_{xx} w.
\end{equation}
We consider solutions to \eqref{eq: rewrite eqn for w} of the form
\begin{equation}
w(t,x)=A(t)-B(t)x^2 \mbox{ where } w(0,x)=w_0(x).
\end{equation}
By substituting this form in \eqref{eq: rewrite eqn for w}, we obtain the following system
\begin{align*}
\frac{dA(t)}{dt}&=-2A(t)B(t),\quad A(0)=w_0(0)=:a,
\\\frac{dB(t)}{dt}&=-6B(t)^2, \quad B(0)=w_0(1)-w_0(0)=:b,
\end{align*}
which finally leads to 
\begin{equation}
A(t)=a(6bt+1)^{-\frac{1}{3}},\quad B(t)=\frac{b}{6bt+1}.
\end{equation}
Therefore, 
\begin{equation}
\label{eq: special w}
w(t,x)=a(6bt+1)^{-\frac{1}{3}}-\frac{b}{6bt+1}x^2.
\end{equation}
\textit{Step 2.} Substituting \eqref{eq: special w} back into \eqref{eq:eqn for u}, we obtain the following continuity equations in terms of $u$.
\begin{equation*}
\partial_t u+\partial_x\left[\frac{2bx}{6bt+1}u\right]=0.
\end{equation*}
We now apply Lemma \ref{lem: sol of continuity} to solve this equation. The ODE \eqref{eq: ODE} becomes
\begin{equation*}
\frac{d}{dt}X(t)=\frac{2b X(t)}{6bt+1},\quad X(0)=x,
\end{equation*}
which gives
\begin{equation*}
X(t)=x(6bt+1)^\frac{1}{3}. \quad\text{ Hence, we get }\quad G(y,t)=\frac{y}{(6bt+1)^\frac{1}{3}}.
\end{equation*}
Therefore, we obtain
\begin{equation*}
u(x,t)=u_0(G(x,t))\frac{\partial G}{\partial x}(x,t)=u_0\left(\frac{x}{(6bt+1)^\frac{1}{3}}\right)\frac{1}{(6bt+1)^\frac{1}{3}}.
\end{equation*}
Concluding, we have obtained a special solution to the system \eqref{eq: eqn for w}-\eqref{eq:eqn for u} as follows
\begin{equation*}
w(t,x)=a(6bt+1)^{-\frac{1}{3}}-\frac{b}{6bt+1}x^2, \quad\text{and}\quad u(x,t)=u_0\left(\frac{x}{(6bt+1)^\frac{1}{3}}\right)\frac{1}{(6bt+1)^\frac{1}{3}},
\end{equation*}
for some $a,b\in\R$. If one considers non-negative solutions, then one should take the positive parts of these expressions.

\subsection{Preservation of the relative entropy and consequences}

We observe that our original system is symmetric in the sense that if we swap $u$ and $v$ in \eqref{eq: u}-\eqref{eq: v} then the system remains unchanged. Therefore, if the initial data $u_0$ and $v_0$ are equal, then it is expected that $u$ and $v$ will be equal at any later time, which is a necessary condition for uniqueness. Two mathematical questions naturally arise at this point:
\begin{enumerate}[(i)]
\item How to prove equality of $u$ and $v$ rigorously?
\item If $u_0$ and $v_0$ are not equal, can we still quantify the distance between $u(t)$ and $v(t)$ in terms of the initial data?
\end{enumerate}
In this section, we provide affirmative answers to these questions using the concept of  relative entropy and the total variation metric. We generalise the results of this section (and of the previous one) to a more general system in Section \ref{sec: generalisation}. It will become clear that structure of the system matches nicely with the concept of the relative entropy.

We now recall the definition of the relative entropy, the total variation metric, as well as a relationship between the twos. We refer the reader to the survey paper \cite{GozlanLeonard2010} for more information.

Let $f(x)\,dx$ and $g(x)\,dx$ be two probability densities on $\mathbb{R}$. The relative entropy of $f$ with respect to $g$ is defined by
\begin{equation*}
    H(f||g):=\int_{\R} \frac{f(x)}{g(x)}\log\frac{f(x)}{g(x)}g(x)\,dx.
\end{equation*}
The total variation distance between $f(x)\,dx$ and $g(x)\,dx$ is defined as
\begin{equation*}
    TV(f,g):=||f-g||_{L^1}=\int_\mathbb{R} \Big|\frac{f(x)}{g(x)}-1\Big|g(x)\,dx.
\end{equation*}
Note that the relative entropy is always non-negative and it is equal to $0$ if and only if $f=g$. Although it is not a distance (it satisfies neither the triangle inequality nor the symmetry condition), it is a useful quantity to measure the difference between two probability measures and has been used extensively in the literature. In addition, it also provides an upper-bound for the total variation distance $TV(f,g)$ by Pinsker's inequality, see for instance \cite[Theorem 1.1]{GozlanLeonard2010}, as 
\begin{equation}
\label{eq: Pinsker inequality}
TV(f,g)\leq \sqrt{2 H(f||g)}.
\end{equation}

\begin{theorem}
\label{theo: rel entropy} Suppose that $u,v$ are classical solutions to the system \eqref{eq: u}-\eqref{eq: v} that decay sufficiently fast at infinity. Then the function $t\mapsto H(u(t)||v(t))$ is constant, i.e., for any $0<t<T^*$ it holds
\begin{equation}
H(u(t)||v(t))=H(u_0||v_0).
\end{equation}
\end{theorem}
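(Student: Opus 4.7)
The plan is to differentiate $t \mapsto H(u(t)\|v(t))$ and show the derivative vanishes, exploiting the fact that both $u$ and $v$ solve continuity equations driven by the \emph{same} velocity field. Set $w=u+v$ and $V:=-w_x$, so that \eqref{eq: u}--\eqref{eq: v} can be rewritten as the twin continuity equations $\partial_t u + \partial_x(uV)=0$ and $\partial_t v + \partial_x(vV)=0$. This common-drift structure is the heart of the matter.

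The key computation is that the log-ratio $\log(u/v)$ is passively transported by $V$. Indeed, using $\partial_t u/u = \partial_x(uw_x)/u = w_{xx} + w_x\,\partial_x\log u$ and the analogous identity for $v$, subtraction gives
\begin{equation*}
\partial_t \log(u/v) = w_x\,\partial_x\log(u/v) = -V\,\partial_x\log(u/v),
\end{equation*}
so $\log(u/v)$ satisfies a pure first-order transport equation with velocity $V$.

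Writing $H(u\|v)=\int_{\R} u\log(u/v)\,dx$, differentiating under the integral sign and substituting both the continuity equation for $u$ and the transport equation for $\log(u/v)$ yields
\begin{equation*}
\frac{d}{dt}H(u\|v)
= -\int_{\R}\partial_x(uV)\,\log(u/v)\,dx \;-\; \int_{\R} uV\,\partial_x\log(u/v)\,dx
= -\int_{\R}\partial_x\!\bigl(uV\log(u/v)\bigr)\,dx,
\end{equation*}
which vanishes by the fast-decay hypothesis at $\pm\infty$. Integrating in $t$ gives the claimed conservation $H(u(t)\|v(t))=H(u_0\|v_0)$.

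I expect the main technical obstacles to be analytic rather than structural: (i) justifying differentiation under the integral sign and the single integration by parts requires sufficient integrability of $u$, $v$, $V=-w_x$, $\log(u/v)$ and their relevant products, which follows from the classical-solution regularity of Theorem \ref{theo: local existence w} combined with the imposed decay at infinity; and (ii) making sense of $\log(u/v)$ requires $u,v>0$ on the time interval considered, which is inherited from strict positivity of the initial data together with the continuity-equation representation (positivity is propagated along the characteristic flow of $V$ via Lemma \ref{lem: sol of continuity}). Once these points are in place, the algebraic cancellation above delivers the result immediately.
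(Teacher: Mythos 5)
Your proof is correct, and it reaches the same conclusion by a slightly different organization of the computation. The paper differentiates $H(u\|v)=\int u\log(u/v)\,dx$ directly, discards one term via conservation of mass ($\int\partial_t u\,dx=0$), substitutes the PDEs, integrates by parts on both remaining terms, and then observes the pointwise cancellation $-u\,\partial_x(u/v)/(u/v)+v\,\partial_x(u/v)=0$. You instead isolate the structural fact that the ratio $u/v$ (hence $\log(u/v)$) is passively advected by the common velocity $V=-w_x$, so that the integrand $u\log(u/v)$ satisfies the local conservation law $\partial_t\bigl(u\log(u/v)\bigr)+\partial_x\bigl(uV\log(u/v)\bigr)=0$ and the time derivative of $H$ is the integral of a perfect $x$-derivative. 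Both arguments rest on the same two ingredients the paper highlights in its remark (continuity-equation form plus identical drifts), but your packaging is arguably cleaner: it needs no separate appeal to mass conservation, and it immediately yields the stronger statement that $\int u\,F(u/v)\,dx$ is conserved for \emph{any} smooth $F$ (i.e., every Csisz\'ar-type divergence, not just the Kullback--Leibler one), which is a different generalisation from the one the paper pursues in Theorem \ref{theo: general rel entropy} (where the flux nonlinearity $f$ is generalised instead). Your caveats (i)--(ii) about positivity of $u,v$ and the decay needed for the boundary term $uV\log(u/v)$ to vanish at infinity are exactly the hypotheses the paper also sweeps into ``decay sufficiently fast at infinity,'' so nothing is missing relative to the paper's own level of rigour.
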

\begin{proof}
We calculate the time-derivative of $t\mapsto H(u(t)||v(t))$ as follows (the time variable $t$ is dropped in the right-hand side for simplicity of notation)
\begin{align}
\frac{d}{dt}H(u(t)||v(t))&=\frac{d}{dt}\int u\log\frac{u}{v}\,dx\nonumber
\\&=\int\Big[\big(\log\frac{u}{v}\big)\partial_t u+u\frac{(v\partial_tu -u\partial_t v)/v^2}{u/v}\Big]\,dx\notag
\\&=\int\Big[\big(\log\frac{u}{v}\big)\partial_t u-\frac{u}{v}\partial_t v\Big]\,dx\label{eq:prob_meas}
\\&=\int\Big[\big(\log\frac{u}{v}\big)[u(u+v)_x]_x-\big(\frac{u}{v}\big)[v(u+v)_x]_x\Big]\,dx\label{eq: integration}
\\&=\int (u+v)_x\Big[-u\frac{\partial_x(u/v)}{u/v}+v\partial_x(u/v)\Big]\,dx=0.\notag
\end{align}
Note that \eqref{eq:prob_meas} follows due to $\int\frac{\partial}{\partial t}u = 0$ which is a consequence of conservation of mass. In \eqref{eq: integration} we have used integration by parts where the boundary terms vanish due to the assumption on the decay of the solution.
\end{proof}
\begin{corollary} 
\label{cor: TV estimate}
For any $0<t<T^*$, it holds that
\begin{equation*}
TV(u(t)||v(t))\leq \sqrt{2H(u_0||v_0)}.
\end{equation*}
\end{corollary}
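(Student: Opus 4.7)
The plan is to chain two facts that are already on the table. First I would invoke Pinsker's inequality \eqref{eq: Pinsker inequality} with $f = u(t)$ and $g = v(t)$, which immediately yields
\begin{equation*}
TV(u(t),v(t)) \leq \sqrt{2\,H(u(t)\|v(t))}
\end{equation*}
for each fixed $t \in (0,T^*)$. Second, I would apply Theorem \ref{theo: rel entropy}, which asserts that $H(u(t)\|v(t)) = H(u_0\|v_0)$ along classical solutions with sufficient decay at infinity. Substituting this identity into the right-hand side of the displayed inequality gives the claimed bound.

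The only subtlety worth flagging is a compatibility check: Pinsker's inequality as quoted is formulated for probability densities, whereas $u(t), v(t)$ are mass concentrations. However, the system \eqref{eq: u}--\eqref{eq: v} conserves total mass (this is exactly what was used at step \eqref{eq:prob_meas} in the proof of Theorem \ref{theo: rel entropy}), so if $u_0, v_0$ are normalised to be probability densities then so are $u(t), v(t)$ for all $t \in (0,T^*)$. Under the standing hypotheses of Theorem \ref{theo: rel entropy} this is the natural setting, and no further argument is needed. There is no real obstacle here; the corollary is a one-line consequence of the preceding two results.
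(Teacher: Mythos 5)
Your argument is exactly the paper's: apply Pinsker's inequality \eqref{eq: Pinsker inequality} to $u(t)$, $v(t)$ and then substitute the conserved value $H(u_0\|v_0)$ from Theorem \ref{theo: rel entropy}. The normalisation remark is a sensible sanity check but does not change the route; the proposal is correct and matches the paper's proof.
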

\begin{proof}
This inequality is direct consequence of the Pinsker's inequality \eqref{eq: Pinsker inequality} and Theorem \ref{theo: rel entropy}.
\end{proof}
\begin{corollary} Suppose that $u_0=v_0$, then $(u,v)=(\frac{1}{2}w,\frac{1}{2}w)$, where $w$ solves \eqref{eq: eqn for w}, is the unique solution to the system \eqref{eq: u}-\eqref{eq: v}.
\end{corollary}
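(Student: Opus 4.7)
The plan is to prove this as a two-part argument: a direct verification that $(u,v) = (w/2, w/2)$ solves \eqref{eq: u}--\eqref{eq: v} when $w$ solves \eqref{eq: eqn for w} with $w_0 = u_0 + v_0 = 2u_0$, and then a uniqueness argument based on Theorem \ref{theo: rel entropy}.

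For existence, I would substitute $u = v = w/2$ into the right-hand side of \eqref{eq: u}: since $u + v = w$, one has $(u(u+v)_x)_x = ((w/2)w_x)_x = \tfrac{1}{2}(ww_x)_x = \tfrac{1}{4}\partial_{xx}(w^2) = \tfrac{1}{2}\partial_t w = \partial_t u$, and symmetrically for \eqref{eq: v}. The initial condition $u(0,\cdot) = w_0/2 = u_0 = v_0 = v(0,\cdot)$ is immediate from $w_0 = 2u_0$.

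For uniqueness, let $(u,v)$ be any classical solution (with sufficient decay at infinity, so that Theorem \ref{theo: rel entropy} applies) corresponding to the common initial datum $u_0 = v_0$. Conservation of mass gives $\int u(t,x)\,dx = \int u_0 = \int v_0 = \int v(t,x)\,dx$ for every $t$, so after a common normalisation we may view $u(t,\cdot)$ and $v(t,\cdot)$ as probability densities. Theorem \ref{theo: rel entropy} then yields $H(u(t)\|v(t)) = H(u_0\|v_0) = 0$, and Gibbs' inequality forces $u(t,\cdot) = v(t,\cdot)$ almost everywhere, hence everywhere by continuity of classical solutions. Setting $w := u + v = 2u$ reduces the problem to Boussinesq's equation \eqref{eq: eqn for w} with initial datum $2u_0$, whose classical solution is unique by the same result from \cite{Vaz06} invoked in Theorem \ref{theo: local existence w}. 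Therefore $u = v = w/2$ is uniquely determined.

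The one subtle point I expect to need to address is the passage from $H(u(t)\|v(t)) = 0$ to $u(t) = v(t)$: this characterisation of vanishing relative entropy requires $u$ and $v$ to have equal total mass, which is exactly what the conservation-of-mass identity plus the hypothesis $u_0 = v_0$ supplies. Once this observation is in place, the statement follows cleanly from Theorem \ref{theo: rel entropy} combined with the uniqueness theory for the Boussinesq equation.
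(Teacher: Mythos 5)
Your proof is correct and follows essentially the same route as the paper: the preservation of relative entropy (Theorem \ref{theo: rel entropy}) forces $H(u(t)\|v(t))=0$ whenever $u_0=v_0$, hence $u(t)=v(t)$, and the pair is then pinned down by the Boussinesq equation for $w=u+v$. You supply some details the paper leaves implicit (the direct verification that $(w/2,w/2)$ is indeed a solution, the equal-mass normalisation needed to conclude $u=v$ from vanishing relative entropy, and the appeal to uniqueness for \eqref{eq: eqn for w}), but the core argument is the same.
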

Heuristically, note that if $u_0=v_0$ and $(u,v)$ satisfies the system \eqref{eq: u}-\eqref{eq: v}, then so does $(v,u)$. To guarantee the uniqueness, it follows that $u=v=\frac{1}{2}w$. Theorem \ref{theo: rel entropy} offers a much stronger result.
\begin{proof}
This is a direct consequence of Theorem \ref{theo: rel entropy} (or Corollary \ref{cor: TV estimate}). Since $u_0=v_0$, we have $H(u_0||v_0)=0$. Then it follows from Theorem \ref{theo: rel entropy} that $H(u(t)||v(t))=0$ for all $t>0$, which in turn implies that $u(t)=v(t)=\frac{1}{2}w(t)$ for all $t>0$. 
\end{proof}
\subsection{Generalisations}
\label{sec: generalisation}
 It is worth noting that Theorem \ref{theo: local existence w} and Theorem \ref{theo: rel entropy} can be extended to a more general system of the form
\begin{numcases}{}
\partial_t u=\Big[u\big[f(u+v)\big]_x\Big]_x,\quad u(0)=u_0(x),\label{eq: general 1}\\
\partial_t v=\Big[v\big[f(u+v)\big]_x\Big]_x,\quad v(0)=v_0(x)\label{eq: general 2}.
\end{numcases}
The transformed system for $(w,u)$, where $w=(u+v)$, now becomes
\begin{numcases}{}
\partial_t w=\partial_{x}(w\partial_x f(w)),\quad w(0)=w_0(x),\nonumber\\
\partial_t u=\partial_x(u \partial_x f(w)),\quad u(0)=u_0(x).\nonumber
\end{numcases}
For instance, if $f(z)=z^{m-1}$ for some $m>1$, then the equation for $w$ becomes
\begin{equation*}
\partial_t w=\frac{m-1}{m}\partial_{xx}(w^m).
\end{equation*}
This is the standard porous medium equation. Hence,  we can apply \cite[Theorem 3.1]{Vaz06} again; thus Theorem \ref{theo: local existence w} still holds true. We now show that Theorem \ref{theo: rel entropy} can also be extended to general shapes of $f$.
\begin{theorem} 
\label{theo: general rel entropy}
Suppose that $u,v$ are classical solutions to the general system \eqref{eq: general 1}-\eqref{eq: general 2} that decay sufficiently fast at infinity. Then the function $t\mapsto H(u(t)||v(t))$ is constant.
\end{theorem}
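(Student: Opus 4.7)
The plan is to mirror essentially verbatim the computation used in Theorem \ref{theo: rel entropy}, exploiting the fact that the proof there never used the precise form of the pressure law: the only feature that mattered was that both equations are continuity equations sharing the same drift. In the general setting, rewriting \eqref{eq: general 1}--\eqref{eq: general 2} as
\[
\partial_t u=\partial_x\bigl(u\,[f(w)]_x\bigr),\qquad \partial_t v=\partial_x\bigl(v\,[f(w)]_x\bigr),
\]
with $w=u+v$, the common drift is $V(t,x):=[f(w)]_x$. This is the structural ingredient the proof will exploit.

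First, I would differentiate $t\mapsto H(u(t)\|v(t))=\int u\log(u/v)\,dx$ exactly as in the proof of Theorem \ref{theo: rel entropy}. Applying the product and chain rules and using conservation of mass $\int \partial_t u\,dx=0$ (which holds automatically since $\partial_t u$ is a divergence), the same manipulations as in equations \eqref{eq:prob_meas}--\eqref{eq: integration} yield
\[
\frac{d}{dt}H(u(t)\|v(t))=\int \log\frac{u}{v}\,\partial_t u\,dx-\int\frac{u}{v}\,\partial_t v\,dx.
\]

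Second, I would substitute the general evolution equations into this expression and integrate by parts, the boundary terms vanishing under the assumed decay at infinity. This gives
\[
\frac{d}{dt}H(u(t)\|v(t))=-\int [f(w)]_x\,u\,\partial_x\!\log\frac{u}{v}\,dx+\int [f(w)]_x\,v\,\partial_x\!\frac{u}{v}\,dx.
\]
Using $u\,\partial_x\log(u/v)=u\cdot\frac{\partial_x(u/v)}{u/v}=v\,\partial_x(u/v)$, the two integrands coincide and cancel pointwise, so $\frac{d}{dt}H(u(t)\|v(t))=0$. Integrating in time concludes the proof.

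The main point, and the only real obstacle, is that the particular form $f(z)=z$ never played a role in the cancellation step of Theorem \ref{theo: rel entropy}; the full dependence on $f$ enters only through the single scalar factor $[f(w)]_x$, which factors out cleanly and is therefore transparent to the algebraic identity that produces the cancellation. The remaining issue is purely one of sufficient regularity and decay of $(u,v,f(w))$ at infinity to legitimate the integration by parts; this is exactly the standing assumption in the theorem statement, just as in Theorem \ref{theo: rel entropy}, so no additional work is required beyond recording it.
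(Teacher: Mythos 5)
Your proposal is correct and follows exactly the route the paper takes: redo the computation of Theorem \ref{theo: rel entropy}, observe that the drift $[f(w)]_x$ factors out as a common scalar, and conclude from the identity $u\,\partial_x\log(u/v)=v\,\partial_x(u/v)$ that the integrand vanishes. The paper's own proof is precisely this one-line reduction, so no comparison beyond that is needed.
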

\begin{proof}
Similar computations as in the proof of Theorem \ref{theo: rel entropy} give
\begin{equation*}
\frac{d}{dt}H(u(t)||v(t))=\int \partial_x f(u+v)\Big(-u\frac{\partial_x(u/v)}{u/v}+v\partial_x(u/v)\Big)\,dx=0.
\end{equation*}
\end{proof}
\begin{remark}
We note that the common relation that makes the relative entropies in both Theorem \ref{theo: rel entropy} and Theorem \ref{theo: general rel entropy} vanish is
\begin{equation*}
-u\frac{\partial_x(u/v)}{u/v}+v\partial_x(u/v)=0.
\end{equation*}
Tracing back this relation in the calculations, this property appears because of the combination of three ingredients: the formula of the relative entropy, the symmetry of the system, and the formulas of the continuity equations. The last two properties together form the structure of the system.
\begin{enumerate}[i)]
\item The continuity equations provide that 
\begin{equation*}
\partial_t u=\partial_x[u X]\quad\text{and}\quad \partial_t v=\partial_x[v Y],
\end{equation*}
\item The symmetry of \eqref{eq: u} and \eqref{eq: v} means that $X=Y$ (so that if we swap $u$ and $v$, the system is unchanged).
\end{enumerate} 
In other words, we find that the relative entropy is constant essentially due to the structure of the system.
\end{remark}

\section{Particle system approach}
\label{sec: particle}
In this section, we introduce a many-particle system that includes coupled weakly interacting stochastic differential equations. We formally show that the empirical measures associated to this system converge to solutions of the original system \eqref{eq: u}-\eqref{eq: v}. The rigorous proof will be given in a separate paper.

We consider the following particle system: 
\begin{align}
    \begin{split}
&dX_t^{i,\varepsilon}=-\frac{1}{n}\sum\limits_{j=1}^n \left(V'_\varepsilon(X^{j,\varepsilon}_t-X^{i,\varepsilon}_t)+V'_\varepsilon(Y^{j,\varepsilon}_t-X^{i,\varepsilon}_t)\right)\,dt+\varepsilon dW^i_t,
\\&dY_t^{i,\varepsilon}=-\frac{1}{n}\sum\limits_{j=1}^n \left(V'_\varepsilon(X^{j,\varepsilon}_t-Y^{i,\varepsilon}_t)+ V'_\varepsilon(Y^{j,\varepsilon}_t-Y^{i,\varepsilon}_t)\right)\,dt+\varepsilon dW^{n+i}_t,
    \end{split}
    \label{eq:particle_system}
\end{align}
for $i=1,\ldots, n$, where $\{W^i\}_{i=1}^{2n}$ are independent standard Wiener processes, $\{V_\varepsilon\}_{\varepsilon\geq 0}$ are a sequence of smooth functions which are chosen later on. Note that the system in \eqref{eq:particle_system} can be seen as a generalisation of the many-particle system arising in \cite{Philipowski2007} to our model of coupled interactions of two species. Remark also that in \cite{DiFrancescoFagioli2013,DiFrancescoFagioli2016}, the authors studied similar systems but in the absence of the stochastic noise.

We define the following empirical measures
\begin{equation}
\label{eq: empirical measure}
u^{n,\varepsilon}_t=\frac{1}{n}\sum\limits_{i=1}^n\delta_{X^{i,\varepsilon}_t},\qquad v^{n,\varepsilon}_t= \frac{1}{n}\sum\limits_{i=1}^n\delta_{Y^{i,\varepsilon}_t}.
\end{equation}

We now formally derive the system \eqref{eq: u}-\eqref{eq: v} in two steps:
\begin{enumerate}[\textbf{Step} 1:]
\item {\em Hydrodynamic limit}, as $n$ tends to infinity: \begin{equation*}
u^{n,\varepsilon}_t\rightharpoonup u^\varepsilon_t,\qquad v^{n,\varepsilon}_t\rightharpoonup v^\varepsilon_t,
\end{equation*}
where $(u^\varepsilon,v^\varepsilon)$ solves a system which depends on $V'_\varepsilon$ and with some viscous terms.
\item {\em Viscosity limit}, as $\varepsilon$ tends to $0$:
\begin{equation*}
u^\varepsilon_t\rightharpoonup u_t,\qquad v^\varepsilon_t\rightharpoonup v_t,
\end{equation*}
where $(u,v)$ solves the original system.
\end{enumerate}

The derivation explains the choice of scalings occurring in the many-particle system. Now, we perform the first step.

\textbf{Step 1 (Hydrodynamic limit):} Let $f$ be a sufficiently smooth function. By definition \eqref{eq: empirical measure} of the empirical measure $u_t^{n,\varepsilon}$, we have
\begin{equation*}
\langle f, u_t^{n,\varepsilon}\rangle := \int f(x) u_t^{n,\varepsilon}(dx)=\frac{1}{n}\sum_{i=1}^n f(X^{i,\varepsilon}_t).
\end{equation*}
Using It\^o's lemma and definition of the empirical measures in \eqref{eq: empirical measure}, we derive that
\begin{align*}
d\langle f, u_t^{n,\varepsilon}\rangle&=\langle -f' V'_\varepsilon\ast (u^{n,\varepsilon}_t+v^{n,\varepsilon}_t)+\frac{\varepsilon^2}{2}f'',u^{n,\varepsilon}_t\rangle\,dt+\varepsilon\sum_{i=1}^n f'(X^{i,\varepsilon}_t)dW^i_t,
\\ d\langle f, v_t^{n,\varepsilon}\rangle&=\langle -f' V'_\varepsilon\ast (u^{n,\varepsilon}_t+v^{n,\varepsilon}_t)+\frac{\varepsilon^2}{2}f'',v^{n,\varepsilon}_t\rangle\,dt+\varepsilon\sum_{i=1}^n f'(Y^{i,\varepsilon}_t)dW^{n+i}_t,
\end{align*}
where $\ast$ denotes a convolution operator. By taking the expectation, the Brownian terms vanish, and we obtain that
\begin{align*}
\partial_t \E\langle f, u_t^{n,\varepsilon}\rangle&=\E \langle \partial_x\big[u^{n,\varepsilon}_t V'_\varepsilon\ast(u^{n,\varepsilon}_t+v^{n,\varepsilon}_t)\big]+\frac{\varepsilon^2}{2}\partial_{xx}u^{n,\varepsilon}_t,f \rangle,
\\\partial_t \E\langle f, v_t^{n,\varepsilon}\rangle&=\E \langle \partial_x\big[v^{n,\varepsilon}_t V'_\varepsilon\ast(u^{n,\varepsilon}_t+v^{n,\varepsilon}_t)\big]+\frac{\varepsilon^2}{2}\partial_{xx}v^{n,\varepsilon}_t,f \rangle.
\end{align*}
The key point is that we now suppose that $u^{n,\varepsilon}_t\xrightharpoonup{\,n\to\infty\,} u^\varepsilon_t, v^{n,\varepsilon}_t\xrightharpoonup{\,n\to\infty\,} v^\varepsilon_t$ where $u^\varepsilon_t$ and $v^\varepsilon_t$ are deterministic profiles. Then the pair $(u^\varepsilon_t, v^\varepsilon_t)$ satisfies for all $f$ the following identities:
\begin{align*}
\partial_t \langle f, u_t^{\varepsilon}\rangle&=\langle \partial_x\big[u^{\varepsilon}_t V'_\varepsilon\ast(u^{\varepsilon}_t+v^{\varepsilon}_t)\big]+\frac{\varepsilon^2}{2}\partial_{xx}u^{\varepsilon}_t,f \rangle,
\\\partial_t \langle f, v_t^{\varepsilon}\rangle&=\langle \partial_x\big[v^{\varepsilon}_t V'_\varepsilon\ast(u^{\varepsilon}_t+v^{\varepsilon}_t)\big]+\frac{\varepsilon^2}{2}\partial_{xx}v^{\varepsilon}_t,f \rangle,
\end{align*}
which are respectively weak formulations of
\begin{align*}
\partial_t u^\varepsilon_t&=\partial_x\big[u^{\varepsilon}_t V'_\varepsilon\ast(u^{\varepsilon}_t+v^{\varepsilon}_t)\big]+\frac{\varepsilon^2}{2}\partial_{xx}u^{\varepsilon}_t,
\\\partial_t v^\varepsilon_t&=\partial_x\big[v^{\varepsilon}_t V'_\varepsilon\ast(u^{\varepsilon}_t+v^{\varepsilon}_t)\big]+\frac{\varepsilon^2}{2}\partial_{xx}v^{\varepsilon}_t.
\end{align*}

\textbf{Step 2 (Viscosity limit):} Assume that $u^\varepsilon_t\rightharpoonup u_t$, $ v^\varepsilon_t\rightharpoonup v_t$, $V_\varepsilon\rightharpoonup \delta$ and such that the diffusive terms vanish in the limit $\varepsilon\rightarrow0$. Then, since
\begin{align*}
V_\varepsilon'\ast (u_t^\varepsilon+v_t^\varepsilon)(x)&=\int V_\varepsilon'(x-y)(u_t^\varepsilon+v_t^\varepsilon)(y)\,dy=\int \partial_y (u_t^\varepsilon+v_t^\varepsilon)(y)V_\varepsilon(x-y)\,dy
\\&\to \int \partial_y (u_t^\varepsilon+v_t^\varepsilon)(y)\delta_{x-y}\,dy=\partial_x (u_t^\varepsilon+v_t^\varepsilon)(x),
\end{align*}
we formally get
\begin{align*}
&\partial_t u=\partial_x[u\partial_x(u+v)],
\\&\partial_t v=\partial_x[v\partial_x(u+v)],
\end{align*}
which is exactly the system \eqref{eq: u}-\eqref{eq: v}. To show rigorously the viscosity limit, we rely on the techniques presented in \cite{Evans}. To keep a concise presentation, we omit to complete  the line of the arguments here and postpone them to a forthcoming paper. 

\section{Numerical simulations}
\label{sec: numerical}
In this section, we illustrate numerically in 2D the solution of \eqref{eq:particle_system} for specific initial data and explore numerically to which extent the continuum model \eqref{eq: u}-\eqref{eq: v} can be approximated based on \eqref{eq:particle_system}.
\subsection{Continuum system}
We naturally extend the one-dimensional model to two dimensions using the following notations: Denote by $T*$ the final observation time. The populations $u=u(x,y,t)$ and $v=v(x,y,t)$, where  $u,v: \Omega\times[0,T^*] \to \mathbb{R}$, with $\Omega\subset \mathbb{R}^2$, satisfy
\begin{align}
    \begin{split}
	\partial_tu&=\nabla\cdot \left( u\nabla (u+v) \right)\\
	\partial_tv&=\nabla\cdot \left( v\nabla (u+v) \right)
    \end{split}
    \textrm{ for }(x,y) , t\in (0,T^*], 
    \label{eq:two_dim_system}
\end{align}
with boundary conditions 
\begin{align}
    \begin{split}
    n\cdot u\nabla (u+v)=0\\
     n\cdot v\nabla (u+v)=0
    \end{split}
    \textrm{ on } \partial\Omega, t\in (0,T^*].
    \label{eq:boundary_condition}
\end{align}

These boundary conditions ensure the conservation of mass in the system, which is also preserved by the suitable finite-volume scheme.
To simulate this system, we use finite-volume discretisation on an equidistant grid where the fluxes adhere to a flux limiter.
To approximate $u$ and $v$, we use a first-order upwind discretisation.
Fluxes are approximated with a second-order central-difference approximation.

The semi-discrete system of ODE's is non-stiff. We use an explicit integration method to maintain the positivity of the solution and acquire and to put no constraints on the Jacobi matrix. Together with the finite-volume space discretisation, this allows for discontinuous initial data.
We use a four-stage Runge-Kutta integration scheme to perform the time integration. 
\subsection{Multi-particle system}
Recall the multi-particle system formulation \eqref{eq:particle_system}.
We simulate the system in the same domain as \eqref{eq:two_dim_system}.
As a potential function $V_\eps$, we use
\begin{equation}
    V_\eps(r/\eps)=\frac{1}{\eps^2\sqrt{2\pi}}e^{\left( -r/(\eps) \right)^2},
	\label{eq:smooth}
\end{equation}
where $r \in \mathbb{R}$ represents the interparticle distance, $c$ is the interaction range parameter and $\eps>0$, modelling a repulsive effect for $r>0$.
This potential formulation is consistent with the potential function description from \cite{Philipowski2007}. In addition the stochasticity allows for modelling the diffusive behaviour present in the continuum system.

Given a particle configuration at time $t$, we use the Euler-Maruyama method (a stochastic variant of the Euler time-integration method) to compute the configuration in $t+\Delta t$.
The positions in time step $t_k$ are updated with:
\begin{align*}
    \begin{split}
        &\Delta X^{i,\varepsilon}=-\frac{1}{n}\sum\limits_{j=1}^n \left(V'_\varepsilon(X^{j,\varepsilon}_{t_k}-X^{i,\varepsilon}_{t_k})+V'_\varepsilon(Y^{j,\varepsilon}_{t_k}-X^{i,\varepsilon}_{t_k})\right)\,\Delta t+\varepsilon\sqrt{\Delta t}W_i,\\
        &\Delta Y^{i,\varepsilon}=-\frac{1}{n}\sum\limits_{j=1}^n \left(V'_\varepsilon(X^{j,\varepsilon}_{t_k}-Y^{i,\varepsilon}_{t_k})+ V'_\varepsilon(Y^{j,\varepsilon}_{t_k}-Y^{i,\varepsilon}_{t_k})\right)\,\Delta t+\varepsilon\sqrt{\Delta t}W_{n+i}.
    \end{split}
    \label{eq:p_system}
\end{align*}
Here, $W_i$ are samples of a standard normal distribution. This term emerges from the distribution of the standard Wiener process: $W_t \sim \mathcal{N}(0,t)$.
We preserve the conservation of mass by implementing reflective boundaries. With these boundaries, we mimic the zero-flux boundaries in the continuum system.

We compute the density by approximating the empirical measure defined in \eqref{eq: empirical measure}. We smoothen the particle positions with a Gaussian kernel $\Phi_h$. This allows us to compare the multi-particle system to its continuum counterpart.
This empirical measure approximation $\mu_h(t)$ for particles $X_1,\dots,X_N$ is defined as $\mu_h(t)= \sum_i^N\delta_{X_t}\ast\Phi_h$, where $h$ represents the smoothing length of the kernel

Figure~\ref{fig:res11} to~\ref{fig:res33} show the results of the two simulations for various points in time for parameters $N_U = N_V=1000$, $\eps=0.3$ and $c = 0.3$. The interpolation kernel has a smoothing length of 0.15.
The continuum solution $w$ is simulated on an equidistant $20\times20$ grid.
We observe similar diffusive and repulsive behaviours in both simulations.

\section{Discussion}
The simulations point out a qualitative agreement with the analytical results. 
Finding the exact relation between the number of particles $N$ and interaction parameter $\eps$ is challenging.
This is due to how density is measured in the multi-particle system (by a finite-radius approximation of the Dirac distribution) and the hidden scaling restrictions that exist on how $N$ and $1/\eps$ go to infinity.
This is illustrated by the following experiments.

For the related problem in \cite{Philipowski2007} we observe the convergence rate \eqref{eq:ineq_philip}. 
We believe that \eqref{eq:ineq_philip} holds in our context as well. From this we induce the condition that $ 1/\eps, N \rightarrow \infty$ under the condition $N$ grows much faster than $1/\eps$.
\begin{equation}
    E\left[ \sup_{0\leq s\leq t} \left|X_s^{N,i,\eps,\delta} - X_s^{i,\delta} \right|^{2} \right] 
    \leq C_1\eps^{-10}\exp\left( \eps^{-12} \right)\frac{1}{N}+C_2\eps^{4}.
    \label{eq:ineq_philip}
\end{equation}
Our numerical experiments indicate that if $N$ and $1/\eps$ increase such that this condition is not respected, the time for the system to reach an equilibrium grows to infinity.

We analyse the density after final time $T*$ for a varying set of parameters. 
We define the residual norm $r_i$ of the particle system as a discrete variant of \eqref{eq:ineq_philip} by performing a sequence of $n$ simulations to find observed densities $\left\{ \mu_h^i(T*) \right\}_{i=1}^n$ and measuring the $\mathcal{L}^2$-norm distance between the densities of simulation $i-1$ and $i$,
\begin{equation}
    r_i = \left|\left| \mu_h^i(T*)  -\mu_h^{i-1}(T*) \right|\right|_{\mathcal{L}^2}.
    \label{eq:residual}
\end{equation}

Figure~\ref{fig:n_fixed_crit} depicts the residual defined in \eqref{eq:residual} for a sequence of simulations with $N=8192$ fixed and $\eps\to0$.
This figure illustrates the transition from systems that converge towards an equilibrium (for $\eps < 2^{-9}$) to stationary systems (for $\eps >  2^{-9}$).

For $\eps = \left( 1/N \right)^\alpha$ and small $\alpha$ we observe the convergence in density profiles. 

\begin{figure}[h]
  \centering
  \begin{minipage}{.45\textwidth}
    \centering
    \includegraphics[width=\textwidth]{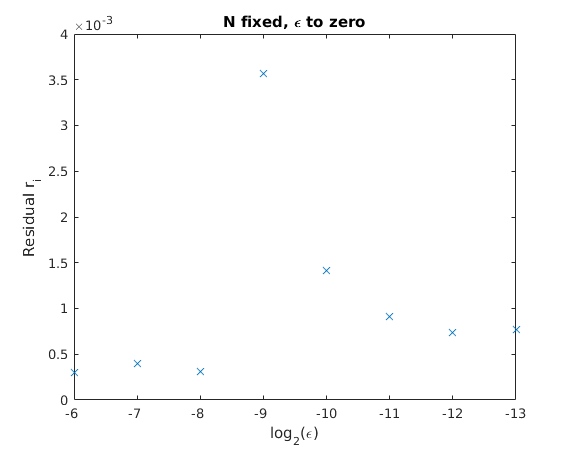}
    \caption{$\mathcal{L}^2$-norm of density residual of particle system after $t=T$ for subsequent simulations, for fixed $N$ and $\eps\to0$.}
    \label{fig:n_fixed_crit}
  \end{minipage}%
  \hfill
  \begin{minipage}{.45\textwidth}
    \centering
    \includegraphics[width=\textwidth]{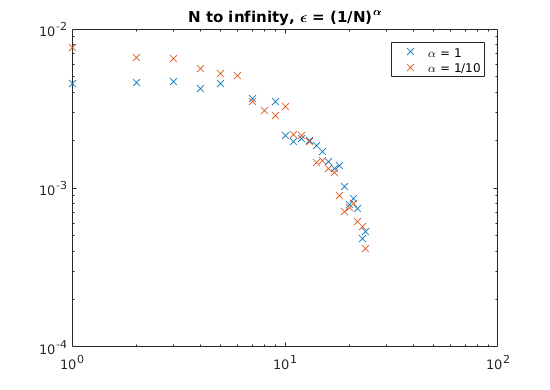}
    \caption{$\mathcal{L}^2$-norm of density residual of the particle system after $t=T$ for subsequent simulations, for two values of $\alpha$.}
    \label{fig:alpha_fixed}
  \end{minipage}
\end{figure}

Finally, the size of the smoothing length $h$ also plays a significant role in representing the interpolated density. 
The finite range of the Dirac interpolation implies that  some mass is lost at the boundaries of the domain.
This effect is visible  when comparing the density profiles at the boundaries of the domain.
Otherwise, a larger smoothing length increases the convergence rate and decreases the distance to the macroscopic density profile.

Further research is required to find an appropriate measure in which experiments converge to the expected macroscopic limit  inside $\Omega$ as well as a correct relation between $N$ and $\eps$.



%
\begin{figure}[h!]
	\begin{minipage}[]{0.3\textwidth}
		\centering
		\includegraphics[width=\textwidth]{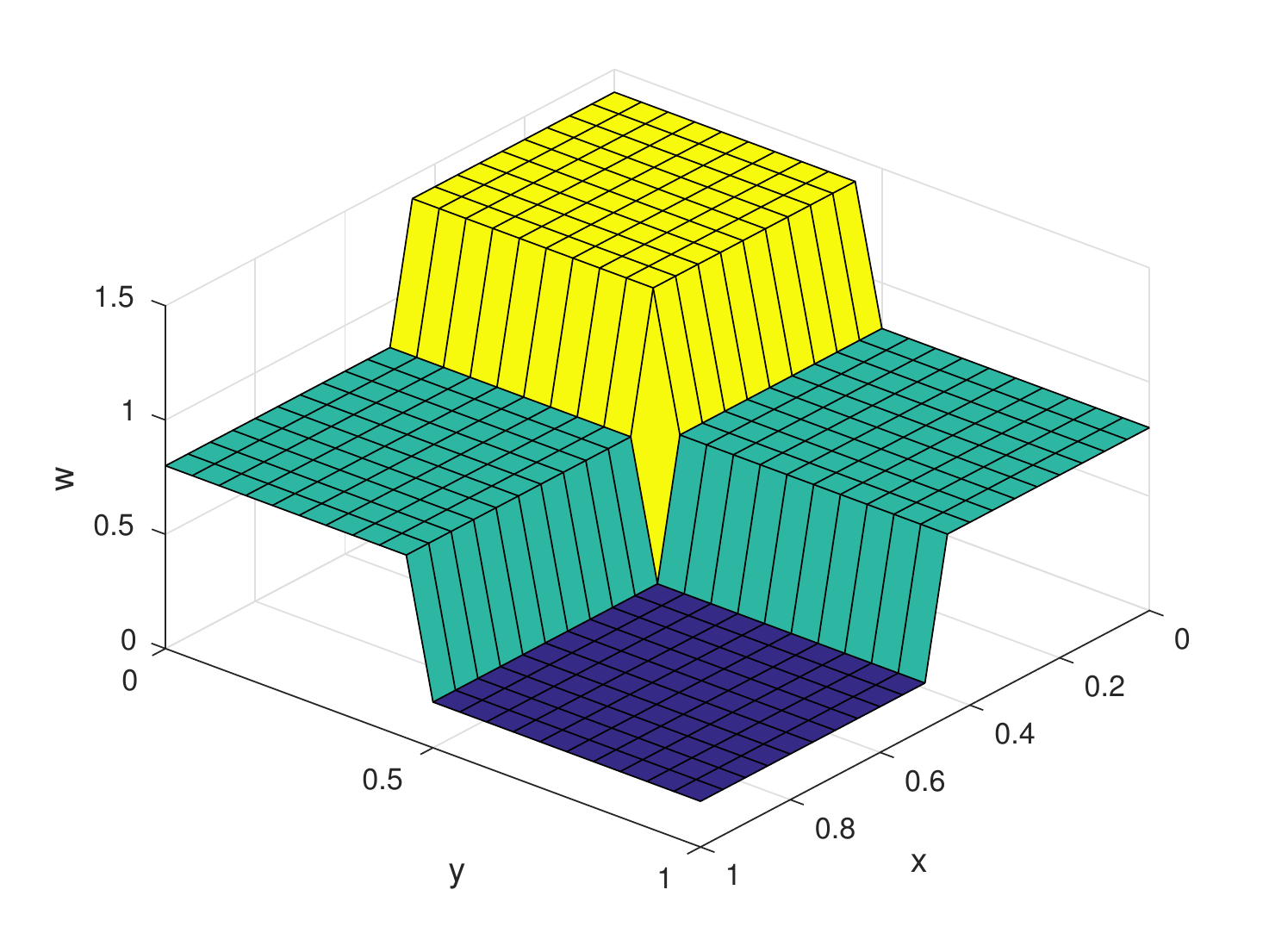}
		\caption{$w(x,y,0)$}
        \label{fig:res11}
	\end{minipage}%
	\hfill
	\begin{minipage}[]{0.3\textwidth}
		\centering
		\includegraphics[width=\textwidth]{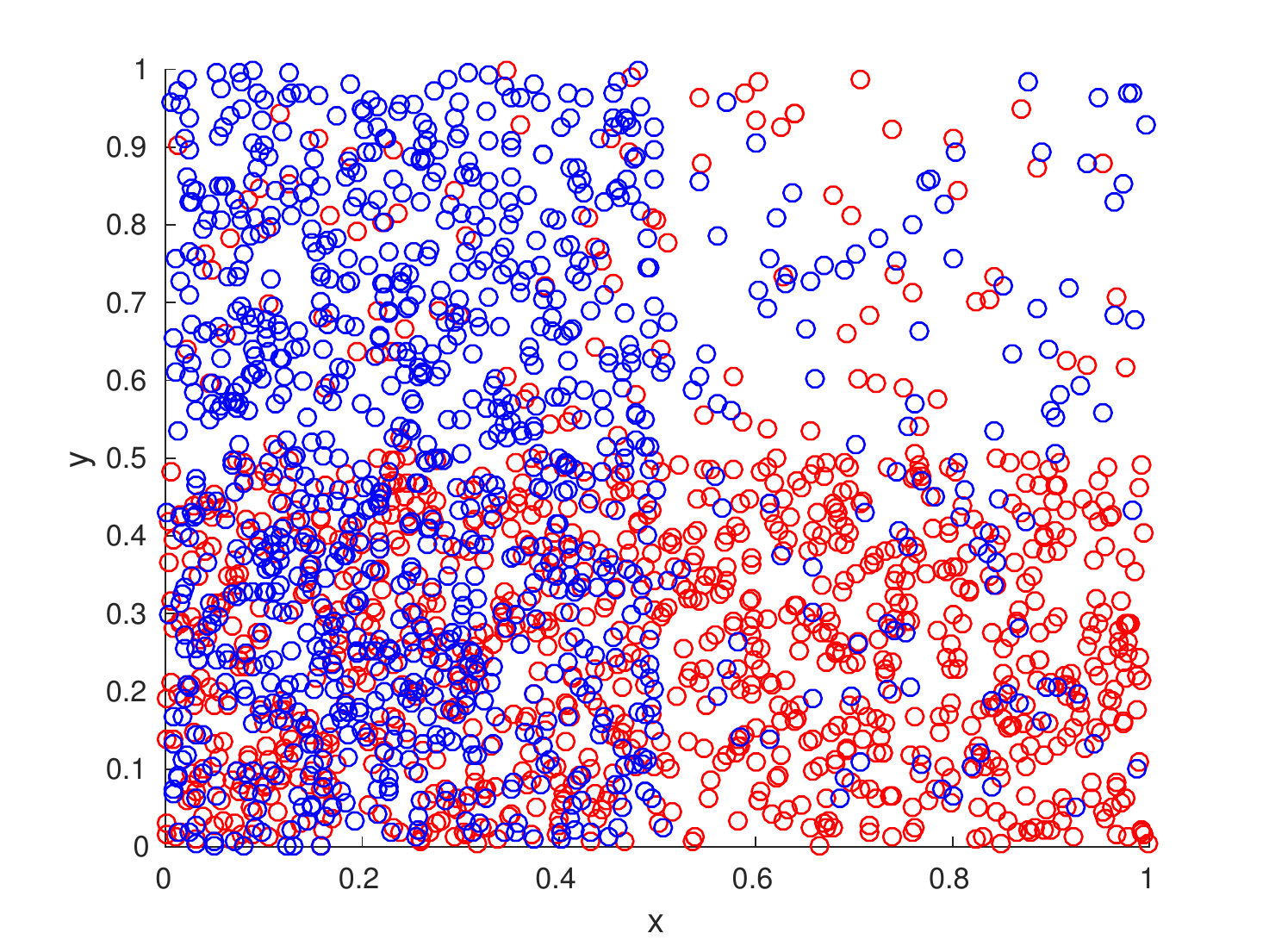}
		\caption{{$X_{0}^{i,\eps}$ (red) and $Y_{0}^{i,\eps}$ (blue)}}
        \label{fig:res12}
	\end{minipage}%
	\hfill
	\begin{minipage}[]{0.3\textwidth}
		\centering
		\includegraphics[width=\textwidth]{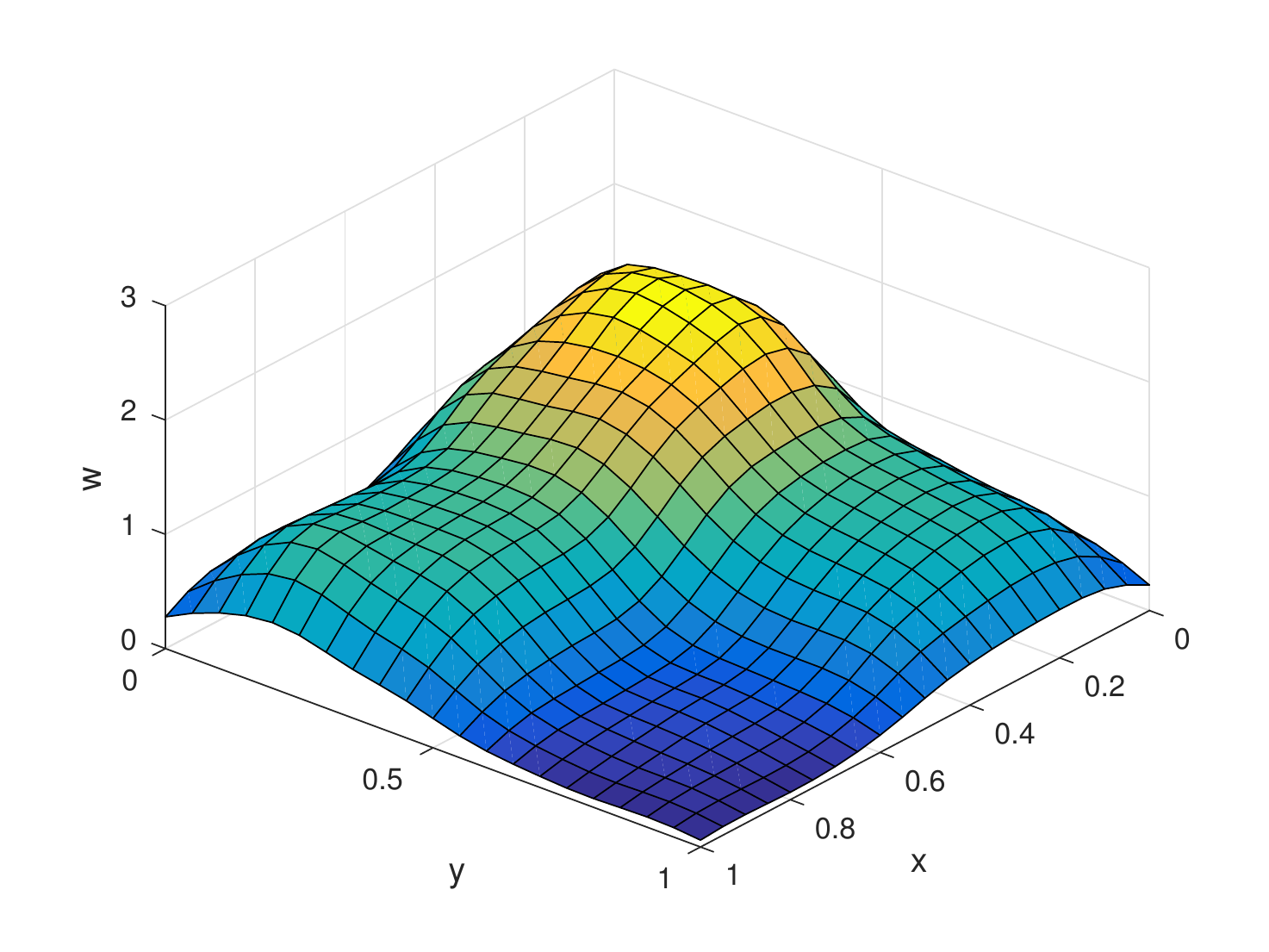}
        \caption{$\mu_{0.15}(0)$ for $X$ and $Y$}
        \label{fig:res13}
	\end{minipage}
    \hfill 
	\begin{minipage}[]{0.3\textwidth}
		\centering
		\includegraphics[width=\textwidth]{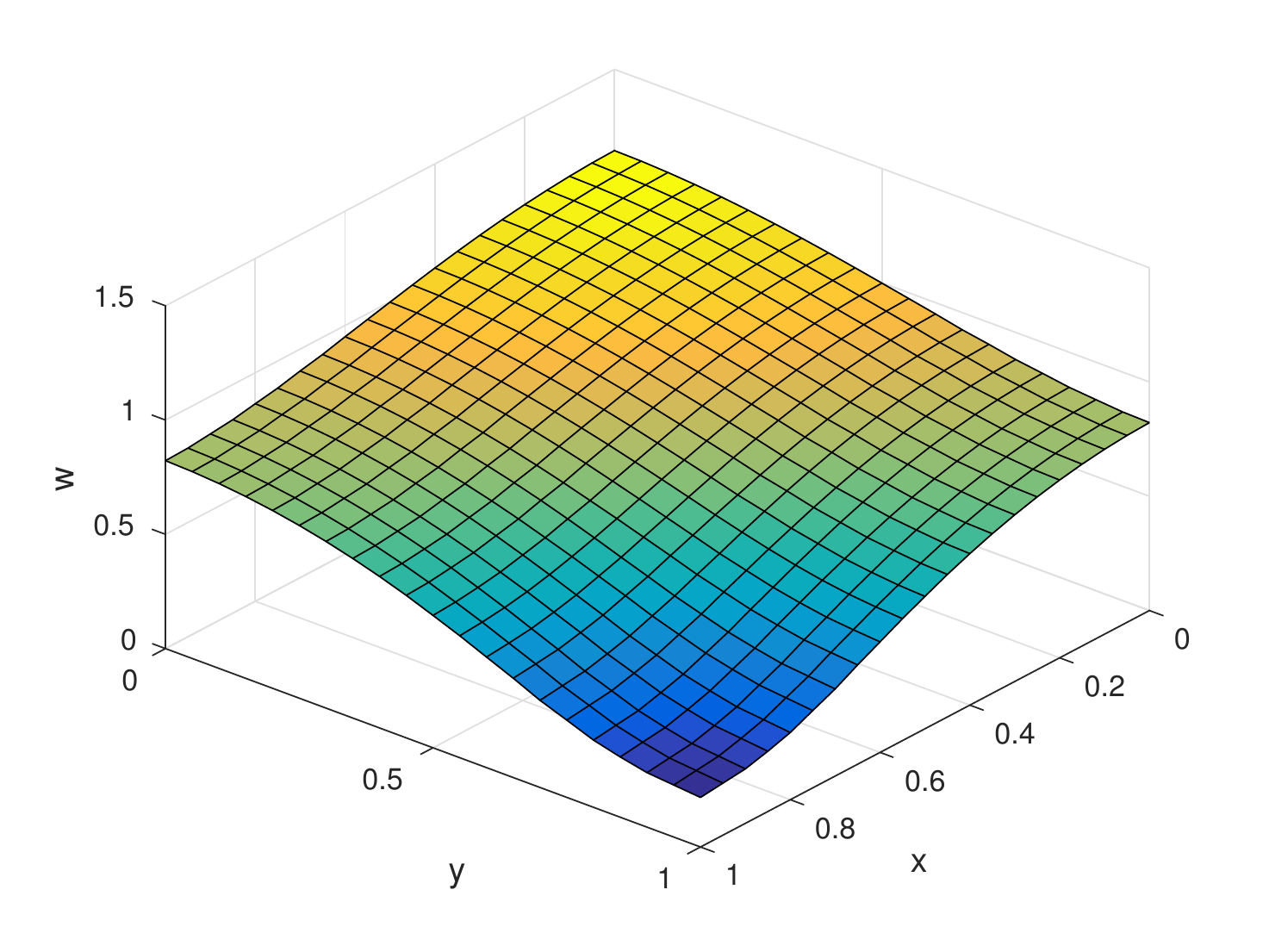}
		\caption{$w(x,y,0.1)$}
        \label{fig:res21}
	\end{minipage}%
	\hfill
	\begin{minipage}[]{0.3\textwidth}
		\centering
		\includegraphics[width=\textwidth]{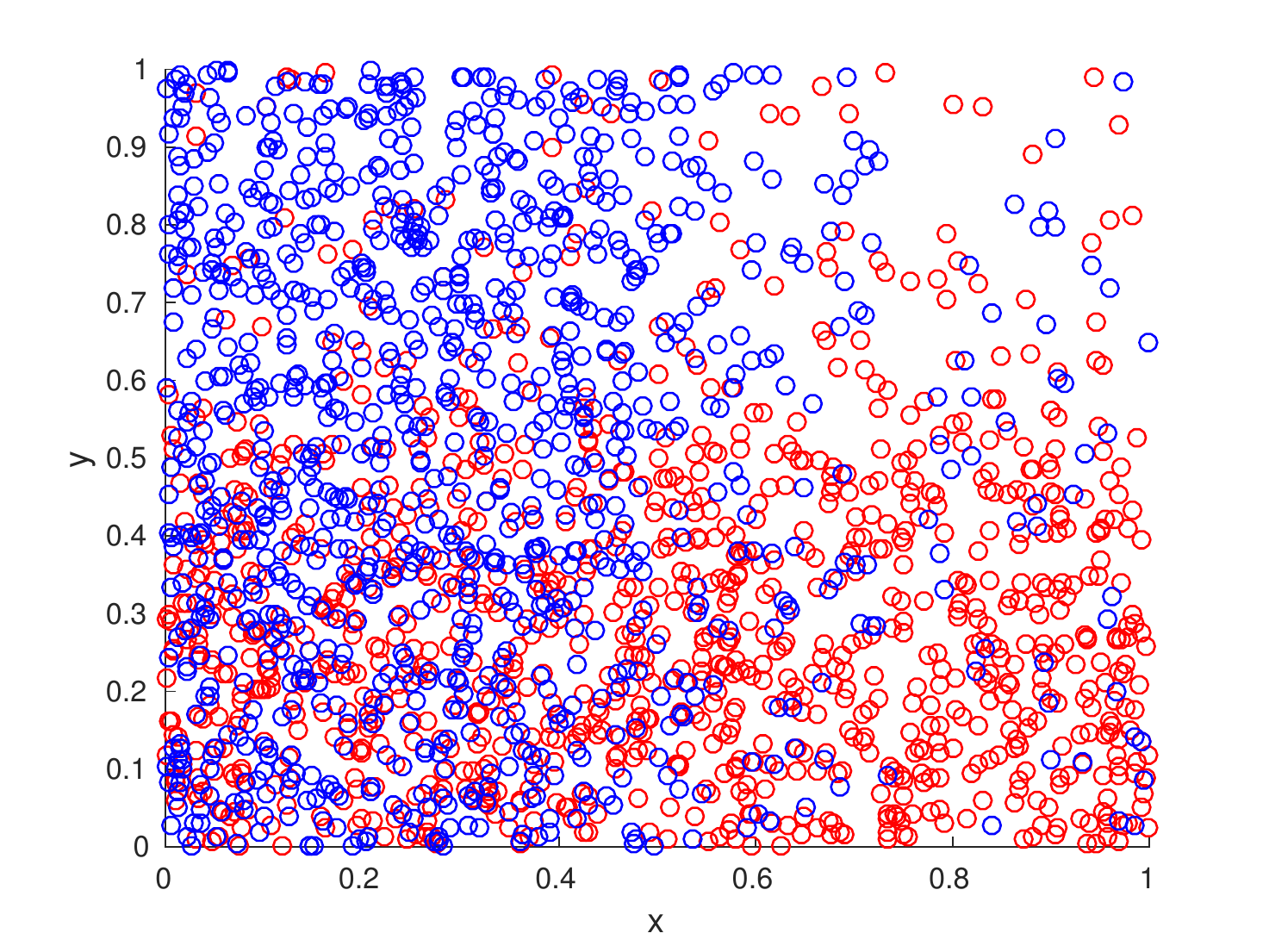}
		\caption{{$X_{0.1}^{i,\eps}$ (red) and $Y_{0.1}^{i,\eps}$ (blue)}}
        \label{fig:res22}
	\end{minipage}%
	\hfill
	\begin{minipage}[]{0.3\textwidth}
		\centering
		\includegraphics[width=\textwidth]{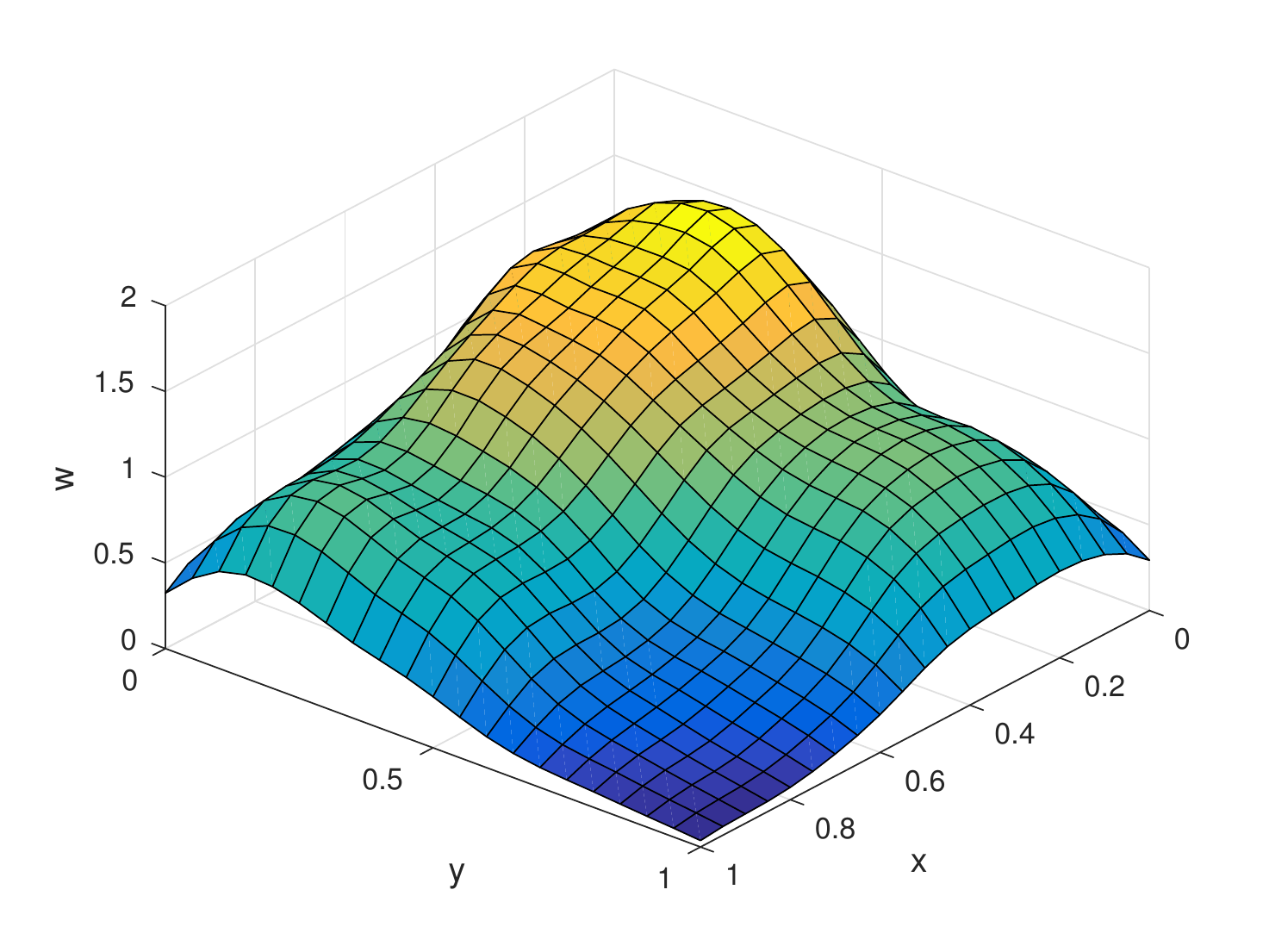}
		\caption{$\mu_{0.15}(0.1)$ for $X$ and $Y$}
        \label{fig:res23}
	\end{minipage}
    \hfill 
	\begin{minipage}[]{0.3\textwidth}
		\centering
		\includegraphics[width=\textwidth]{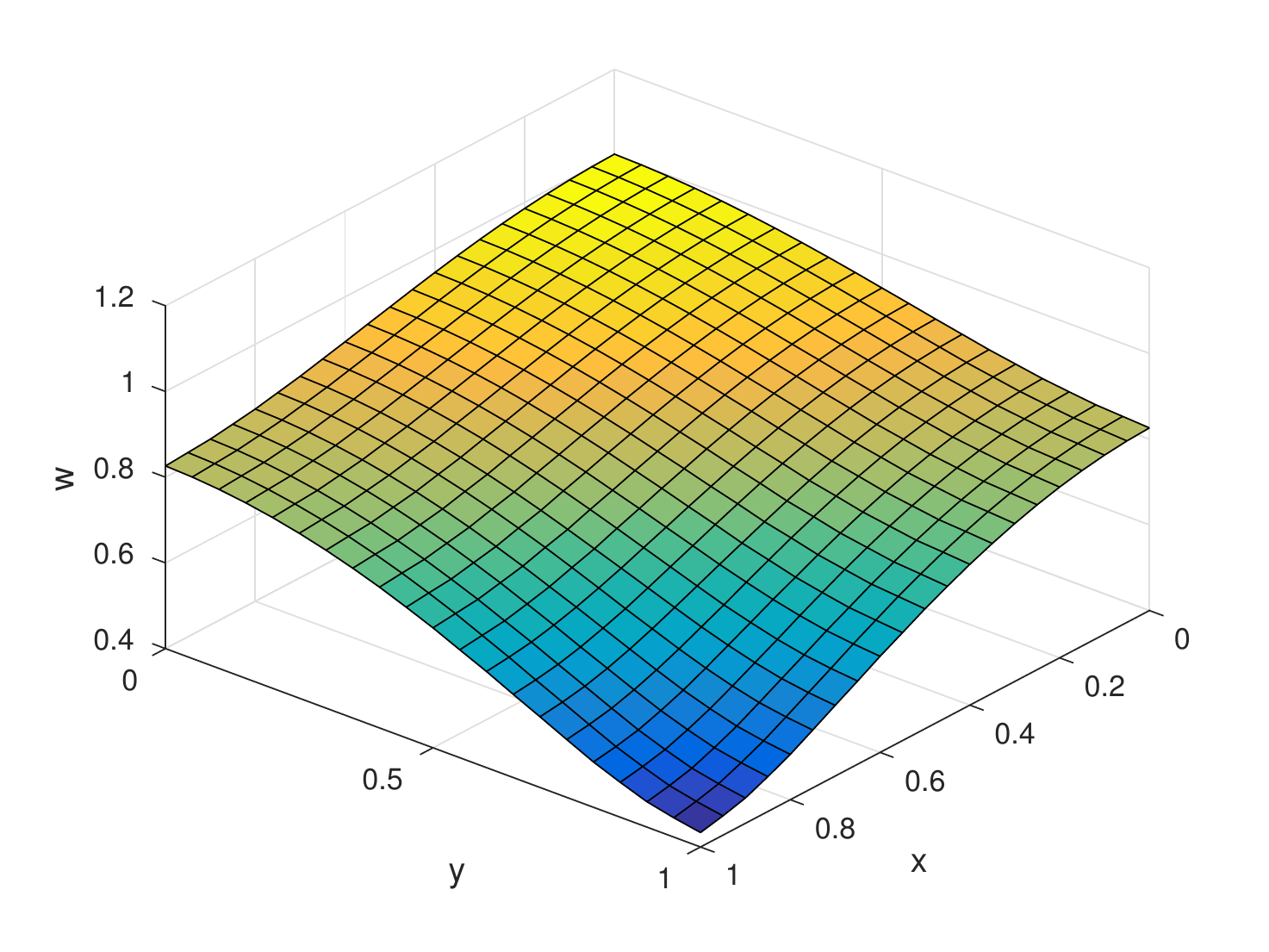}
		\caption{$w(x,y,0.2)$}
        \label{fig:res31}
	\end{minipage}%
	\hfill
	\begin{minipage}[]{0.3\textwidth}
		\centering
		\includegraphics[width=\textwidth]{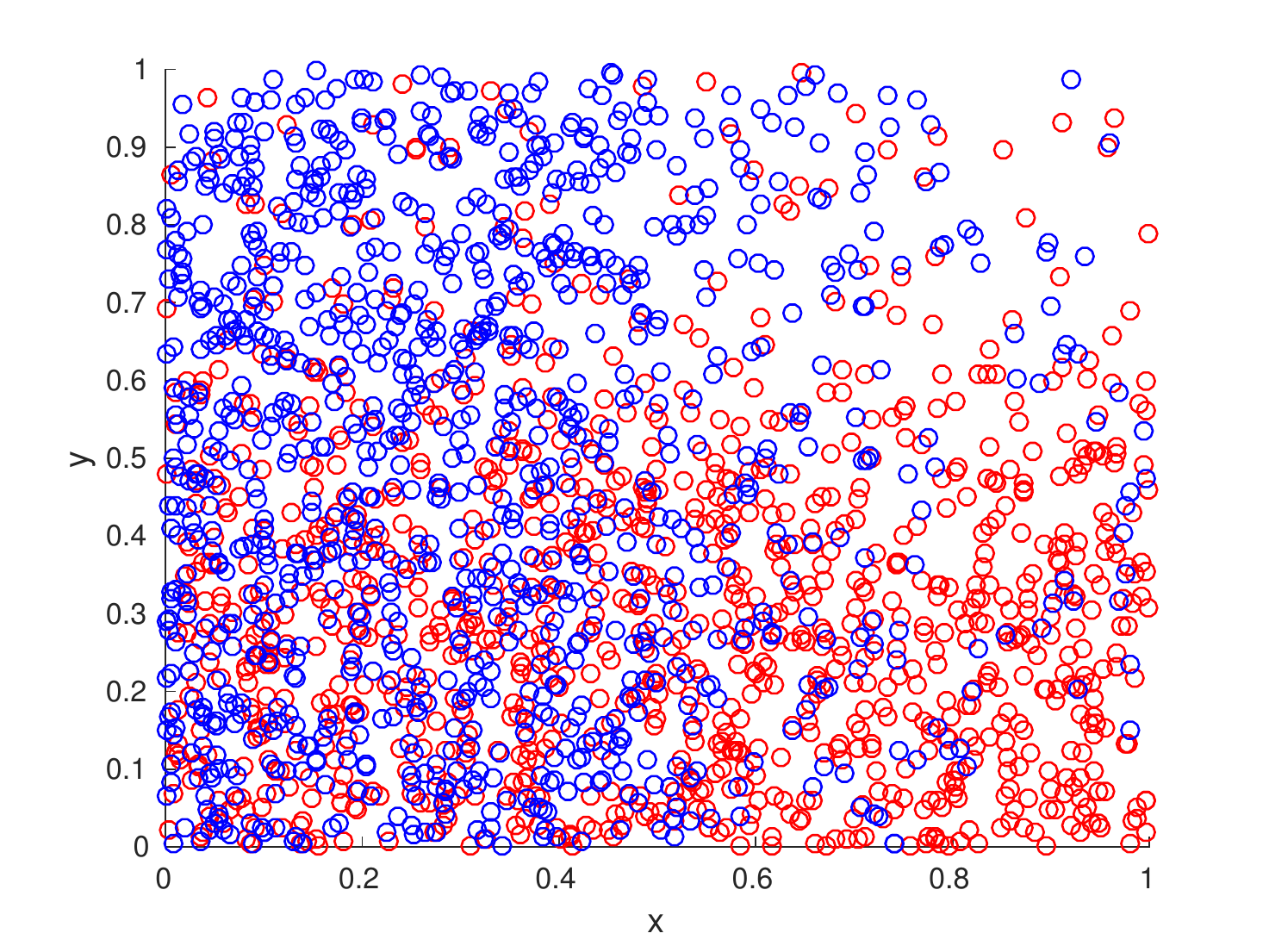}
        \caption{$X_{0.2}^{i,\eps}$ (red) and $Y_{0.2}^{i,\eps}$ (blue)}
        \label{fig:res32}
	\end{minipage}%
	\hfill
	\begin{minipage}[]{0.3\textwidth}
		\centering
		\includegraphics[width=\textwidth]{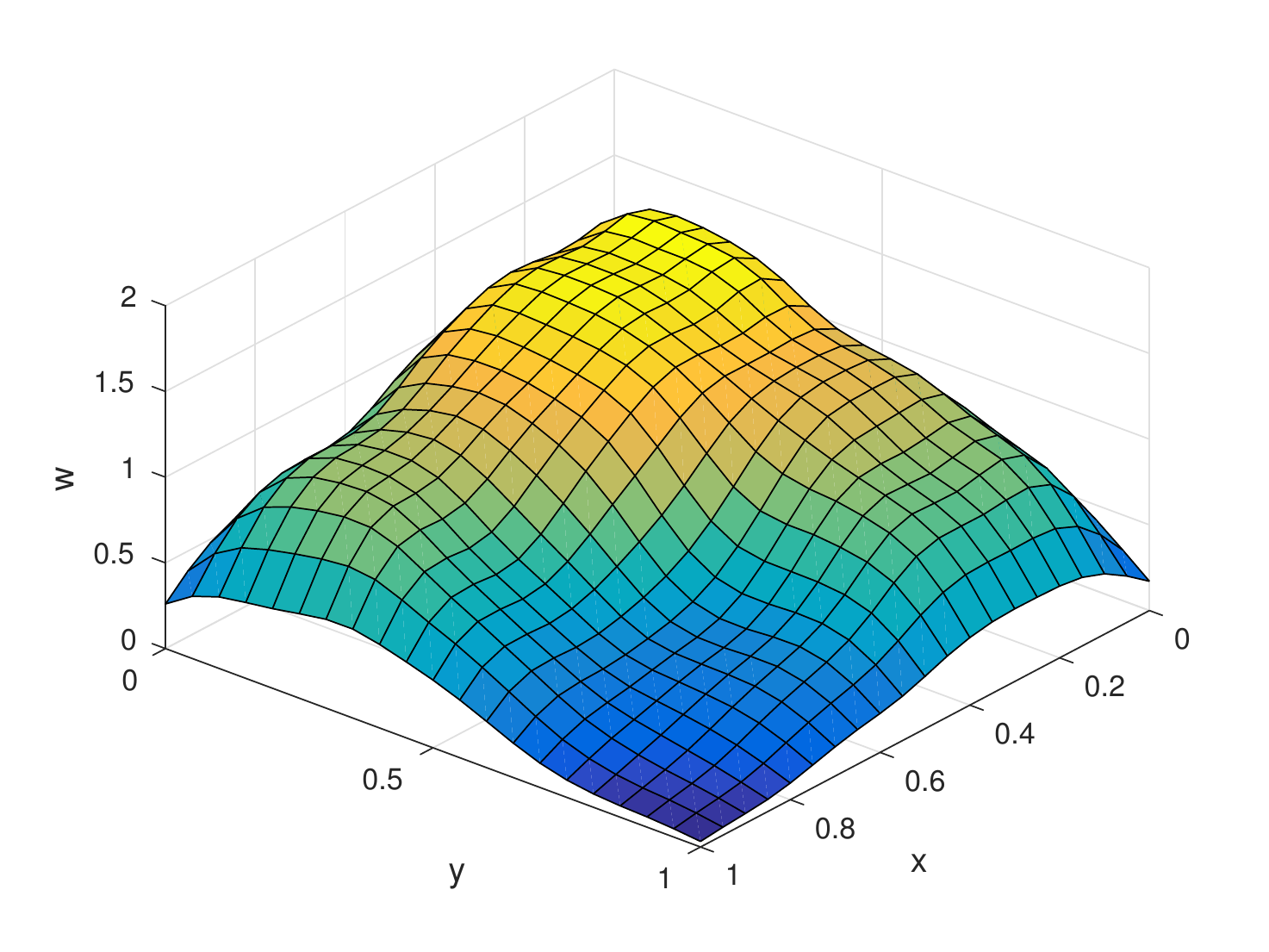}
		\caption{$\mu_{0.15}(0.2)$ for $X$ and $Y$}
        \label{fig:res33}
	\end{minipage}
\end{figure}
\section*{Acknowledgements}
M. H. Duong was  supported by ERC Starting Grant 335120. We wish to thank the referees for useful suggestions.

\end{document}